\theoremstyle{plain}
\newtheorem{teo}{Theorem}[section]
\newtheorem{prop}[teo]{Proposition}
\newtheorem{ackn}{Acknowledgments\!}
\theoremstyle{definition}
\theoremstyle{remark}
\numberwithin{equation}{section}
\def\eps{\varepsilon}
\def\f{\varphi}
\title[Critical metrics of the $L^{2}$--norm of the scalar curvature]{Critical metrics of the $L^{2}$--norm \\of the scalar curvature}
\date{\today}
\author[Giovanni Catino]{Giovanni Catino}
\address[Giovanni Catino]{Dipartimento di Matematica, Politecnico di Milano, Piazza Leonardo da Vinci 32, 20133 Milano, Italy}
\email[]{giovanni.catino@polimi.it}
\date{\today}
\begin{document}

\begin{abstract} In this paper we investigate complete critical metrics of the $L^{2}$--norm of the scalar curvature. We prove that any complete critical metric with positive scalar curvature has constant scalar curvature and we characterize critical metrics with nonnegative scalar curvature in dimension three and four.
\end{abstract}

\maketitle

\begin{center}

\noindent{\it Key Words: Critical metrics, Quadratic functionals}

\medskip

\centerline{\bf AMS subject classification:  53C24, 53C25}

\end{center}

\

\section{Introduction}

%

Let $(M^{n},g)$, $n\geq 3$, be a $n$--dimensional smooth Riemannian manifold and consider the functional
\begin{equation}\label{action}
\mathcal{S}^{2}(g) \,=\, \int_{M} R^{2}\,dV_{g} \,
\end{equation}
on the space of Riemannian metrics on $M^{n}$, where $R$ and $dV_{g}$ denote the scalar curvature and the volume form of $g$ respectively. The first variation of $\mathcal{S}^{2}$ (see~\cite{besse}) in the direction of $h$ reads
\begin{eqnarray*}
\delta\,\mathcal{S}^{2} (g)[h] &=& \int_{M} \big( 2R\,\delta R + \tfrac{1}{2}R^{2} \,tr(h) \big) \,dV_{g} \\
&=& \int_{M} \big( -2R \,\Delta tr(h) + 2R\,\nabla_{i}\nabla_{j} h_{ij}-2R\, R_{ij}h_{ij} +\tfrac{1}{2}R^{2} \,tr(h) \big) \,dV_{g} \\
&=& \int_{M} \big( -2\Delta R \,g_{ij}+2\nabla_{i}\nabla_{j}R -2R R_{ij} + \tfrac{1}{2}R^{2} \,g_{ij} \big) h_{ij} \,dV_{g}\,.
\end{eqnarray*}
Hence, the Euler--Lagrange equation for a critical metric of $\mathcal{S}^{2}$ is given by
$$
2R \, Ric - 2\nabla^{2} R + 2\Delta R\, g \,=\, \tfrac{1}{2} R^{2}\,g \,,
$$
or equivalently 
\begin{equation}\label{eq}
R \,Ric - \nabla^{2} R \,=\, \tfrac{3}{4(n-1)} R^{2}\,g \,,
\end{equation}
\begin{equation}\label{eq1}
\Delta R \,=\, \tfrac{n-4}{4(n-1)} R^{2} \,,
\end{equation}
where equation~\eqref{eq1} is just the trace of~\eqref{eq}. 

Obviously, if a metric is scalar flat or Einstein, then it satisfies~\eqref{eq}. Moreover, equation~\eqref{eq1} implies that any compact critical metric of $\mathcal{S}^{2}$ is {\em trivial}, in the sense that it has constant scalar curvature. More precisely, any compact critical metric of $\mathcal{S}^{2}$ is scalar flat if $n\neq 4$, whereas it is either scalar flat or Einstein if $n=4$. This is clear, since $\mathcal{S}^{2}$ is not scale--invariant if $n\neq4$. To obtain nontrivial compact critical metrics in this case, one should consider the modified functional
$$
\mathcal{S}_{v}^{2}(g) \,=\, \frac{\int_{M}R^{2}\,dV_{g}}{\big(\int_{M}dV_{g}\big)^{(n-4)/n}} \,,
$$
which is scale--invariant. A simple integration by parts argument (see the appendix) shows that any compact critical metric of $\mathcal{S}_{v}^{2}$ with nonnegative scalar curvature is either scalar flat or Einstein (see Anderson~\cite[Proposition 1.1]{and3} for a proof in dimension three). The existence of nontrivial compact critical metrics of $\mathcal{S}_{v}^{2}$ is still an open question.

In this paper we will focus on complete, possibly noncompact, critical metrics of $\mathcal{S}^{2}$. As far as we know, noncompact critical metrics of $\mathcal{S}^{2}$ were not studied yet. Our main result characterizes critical metrics with positive scalar curvature.

\begin{teo}\label{main} Let $(M^{n},g)$, $n\geq 3$, be a complete critical metric of $\mathcal{S}^{2}$ with positive scalar curvature. Then $(M^{n},g)$ has constant scalar curvature.
\end{teo}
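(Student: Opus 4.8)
The starting point is the system \eqref{eq}--\eqref{eq1}, which for $R>0$ we rewrite as $\nabla^2 R = R\,\Ric - \tfrac{3}{4(n-1)}R^2 g$ together with the scalar identity $\Delta R = \tfrac{n-4}{4(n-1)}R^2$. The plan is to argue by contradiction: assume $R$ is nonconstant and derive a differential inequality for some positive function of $R$ that forces, via a maximum principle or a Liouville-type argument at infinity, a contradiction. The key auxiliary quantity I would try first is $u = R^{-a}$ for a suitable exponent $a>0$ (or equivalently a power of $R$), chosen so that the Hessian equation collapses into a clean elliptic inequality. Since $\nabla^2 R$ is controlled by $\Ric$ and $R^2 g$, the traceless part of \eqref{eq} reads
\begin{equation*}
\mathring{\nabla^2 R} \,=\, R\,\mathring{\Ric}\,,
\end{equation*}
so the Hessian of $R$ away from its critical points has a definite sign structure tied to the Ricci curvature. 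On the set where $R$ is not locally constant, combining this with \eqref{eq1} I expect to obtain an inequality of Bochner type for $|\nabla R|^2$.

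Concretely, I would compute $\Delta |\nabla R|^2 = 2|\nabla^2 R|^2 + 2\nabla R\cdot\nabla(\Delta R) + 2\Ric(\nabla R,\nabla R)$. Using \eqref{eq1} the middle term becomes $\tfrac{n-4}{n-1}R|\nabla R|^2$; using \eqref{eq} one can solve for $\Ric(\nabla R,\nabla R)$ in terms of $\nabla^2 R(\nabla R,\nabla R)$ and $R^2|\nabla R|^2$, i.e. $\Ric(\nabla R,\nabla R) = \tfrac1R\nabla^2 R(\nabla R,\nabla R) + \tfrac{3}{4(n-1)}R|\nabla R|^2 = \tfrac{1}{2R}\nabla R\cdot\nabla|\nabla R|^2 + \tfrac{3}{4(n-1)}R|\nabla R|^2$. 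Substituting, the quantity $w = |\nabla R|^2$ (or better $w/R^{k}$ for a fitting power $k$) should satisfy a drift-Laplacian inequality of the form $\Delta_f w \geq \tfrac{1}{R}\langle\nabla R,\nabla w\rangle + c\,R\,w$ with $c>0$ once the $|\nabla^2 R|^2 \ge \tfrac1n(\Delta R)^2$ and the traceless refinement $|\nabla^2 R|^2 \ge \tfrac{1}{n-1}|\mathring{\nabla^2 R}|^2 + \tfrac1n(\Delta R)^2$ are fed in together with the sign of $R$. The precise bookkeeping of constants — making sure the coefficient of the zeroth-order term comes out with the right (strictly positive) sign in every dimension $n\ge 3$ — is the step I expect to be delicate, and it is where the positivity $R>0$ is essential.

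Once such an inequality $\Delta w + \langle X,\nabla w\rangle \geq c\,R\,w$ with $c>0$, $R>0$ is in hand, the conclusion follows from the strong maximum principle in the compact case and from a cutoff/Omori–Yau argument in the complete noncompact case: testing against $\eta^2 w$ for a logarithmic cutoff $\eta$ and integrating, the good zeroth-order term $\int \eta^2 c R w$ is absorbed by the gradient terms only if $w\equiv 0$, hence $R$ is constant. An alternative, possibly cleaner, route avoiding any completeness hypothesis beyond what is assumed: show directly that $f(R) := $ (an explicit primitive making $\nabla^2 f(R)$ a pure multiple of $g$) is such that level sets of $R$ are totally umbilic and $R$ solves an ODE along geodesics normal to them, then classify; but I would expect the Bochner approach above to be the intended one, with the completeness used only to run the integral maximum principle. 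The main obstacle, to repeat, is extracting a strictly positive coefficient in the Bochner inequality uniformly in $n$ — if the naive computation gives a sign-indefinite term, one likely needs to also use the full tensorial equation \eqref{eq} (not just its action on $\nabla R$) to control $|\mathring{\Ric}|$ in terms of $|\mathring{\nabla^2 R}|$ and re-close the estimate.
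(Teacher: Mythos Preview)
Your Bochner strategy is the right one and is essentially what the paper does: set $u=\log R$ (this is the ``fitting power'' you are groping for; note $|\nabla u|^{2}=|\nabla R|^{2}/R^{2}$), run Bochner on $|\nabla u|^{2}$, and use \eqref{eq} to replace $\Ric(\nabla u,\nabla u)$. Your coefficient bookkeeping is fine --- the zeroth-order term does come out strictly positive for every $n\ge 3$, and with the choice $u=\log R$ one additionally picks up a $+\tfrac{2(n+1)}{n}|\nabla u|^{4}$ term (from $|\nabla^{2}u|^{2}\ge\tfrac{1}{n}(\Delta u)^{2}$ together with $\Delta u=-|\nabla u|^{2}+\tfrac{n-4}{4(n-1)}R$), which is what makes a Yau-type pointwise cutoff argument, rather than an integral one, available.

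The genuine gap is in your last step. Neither ``testing against $\eta^{2}w$ with a logarithmic cutoff'' nor a bare Omori--Yau principle closes without some control on the geometry at infinity: an integral argument needs a bound on $\Delta\eta$ (hence on $\Delta r$), and Omori--Yau needs a Ricci-type lower bound; you have assumed neither. The paper's key observation, which you are missing, is that equation \eqref{eq} itself supplies exactly such a bound. Writing $v=-\log R$, one has $\nabla^{2}v-dv\otimes dv=-R^{-1}\nabla^{2}R$, so \eqref{eq} becomes
\[
\Ric+\nabla^{2}v-dv\otimes dv\;=\;\tfrac{3}{4(n-1)}\,R\,g\;\ge\;0.
\]
This says the $1$--Bakry--\'Emery Ricci tensor $\Ric^{1}_{v}$ is nonnegative, and the Wei--Wylie Laplacian comparison then gives $\Delta r\le \langle\nabla r,\nabla v\rangle+n\,r^{-1}\le |\nabla u|+n\,r^{-1}$. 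With this in hand one runs the standard Yau cutoff: set $H=\varphi\,|\nabla u|^{2}$ with $\varphi$ a radial cutoff on $B_{2s}$, use the $|\nabla u|^{4}$ term to absorb the bad $H^{3/2}$ cross-term at a maximum of $H$, and obtain $H\le C\,s^{-2}$; letting $s\to\infty$ gives $\nabla u\equiv 0$. Without the Bakry--\'Emery observation the cutoff term $\varphi'\Delta r$ cannot be controlled and the argument does not close.
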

In particular, from equations~\eqref{eq} and~\eqref{eq1}, if $n\neq 4$, there are no complete critical metrics of $\mathcal{S}^{2}$ with positive scalar curvature, whereas, every complete four--dimensional critical metric of $\mathcal{S}^{2}$ with positive scalar curvature is Einstein. Furthermore, by equation~\eqref{eq1} and the strong maximum principle, if $n\leq 4$ and $g$ is a critical metric with nonnegative scalar curvature, then either $R\equiv 0$ or $R>0$ on $M^{n}$. As an immediate consequence, we have the following characterization of complete critical metrics of $\mathcal{S}^{2}$ with nonnegative scalar curvature in dimension three and four. 

\begin{teo}\label{main3D} Let $(M^{3},g)$ be a complete three--dimensional  critical metric of $\mathcal{S}^{2}$ with nonnegative scalar curvature. Then $(M^{3},g)$ is scalar flat.  
\end{teo}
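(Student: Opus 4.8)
The plan is to deduce the statement directly from Theorem~\ref{main} together with the trace equation~\eqref{eq1}, which in dimension three reads
\begin{equation*}
\Delta R \,=\, -\tfrac{1}{8}\,R^{2}\,.
\end{equation*}
I would first show that either $R\equiv 0$ or $R>0$ on all of $M^{3}$, and then rule out the second alternative.

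For the dichotomy, note that since $R\geq 0$ the displayed identity gives $\Delta R\leq 0$, so $R$ is a nonnegative superharmonic function on the complete manifold $M^{3}$. If $R(p)=0$ for some $p$, then $R$ attains its minimum at $p$, and the strong maximum principle forces $R$ to vanish on a neighbourhood of $p$; hence the nonempty set $\{R=0\}$ is open, and being closed by continuity it is all of $M^{3}$ by connectedness. So if $R$ is not identically zero, then $R>0$ everywhere.

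In that case $(M^{3},g)$ is a complete critical metric of $\mathcal{S}^{2}$ with positive scalar curvature, so Theorem~\ref{main} applies and $R\equiv c$ for some constant $c>0$. Then $\Delta R\equiv 0$, whereas the displayed equation gives $\Delta R=-\tfrac{1}{8}c^{2}<0$, a contradiction. Therefore $R\equiv 0$ and $(M^{3},g)$ is scalar flat.

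I do not expect any genuine obstacle in this argument: once Theorem~\ref{main} is in hand the deduction is elementary (the statement is indeed labelled an ``immediate consequence'' in the text), and the sign of the constant $\tfrac{n-4}{4(n-1)}$ at $n=3$ does all the work. The substantive content lies entirely in Theorem~\ref{main}, whose proof --- presumably via an integral identity or a maximum-principle argument built on equations~\eqref{eq} and~\eqref{eq1} --- is the real difficulty, and not something this corollary adds to.
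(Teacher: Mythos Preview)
Your proof is correct and follows essentially the same route as the paper: use equation~\eqref{eq1} (with $n=3$) and the strong maximum principle to obtain the dichotomy $R\equiv 0$ or $R>0$, then apply Theorem~\ref{main} in the latter case and derive a contradiction from $\Delta R=-\tfrac{1}{8}R^{2}$. The paper states this argument in one line as an ``immediate consequence'' of Theorem~\ref{main}, exactly as you anticipated.
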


\begin{teo}\label{main4D} Let $(M^{4},g)$ be a complete four--dimensional  critical metric of $\mathcal{S}^{2}$ with nonnegative scalar curvature. Then $(M^{4},g)$ is either scalar flat or Einstein with positive scalar curvature.
\end{teo}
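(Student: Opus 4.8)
The plan is to combine the dimension-four form of the trace equation~\eqref{eq1} with Theorem~\ref{main} and a pointwise analysis of the full Euler--Lagrange equation~\eqref{eq}. First I would observe that when $n=4$ equation~\eqref{eq1} reduces to $\Delta R=0$, so that $R$ is a nonnegative harmonic function on the complete manifold $(M^{4},g)$. If $R$ vanishes at some point, then, being harmonic and nonnegative, it attains an interior minimum there, and the strong maximum principle forces $R\equiv 0$; in this case $g$ is scalar flat and we are done. Otherwise $R>0$ everywhere on $M^{4}$, and this is the dichotomy already recorded in the Introduction.

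In the remaining case $R>0$ on all of $M^{4}$, I would invoke Theorem~\ref{main}: a complete critical metric of $\mathcal{S}^{2}$ with positive scalar curvature has constant scalar curvature. Hence $R$ is a positive constant, so in particular $\nabla^{2} R\equiv 0$ (and $\Delta R\equiv 0$, consistently with~\eqref{eq1}). Substituting $\nabla^{2}R=0$ into~\eqref{eq} with $n=4$ gives $R\,\Ric=\tfrac{3}{4(n-1)}R^{2}g=\tfrac14 R^{2}g$, and dividing by the nowhere-vanishing constant $R$ yields $\Ric=\tfrac14 R\,g$. Thus $g$ is Einstein, and since $R>0$ it is Einstein with positive scalar curvature, which completes the proof.

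The only genuinely substantial input is Theorem~\ref{main}; once that is available, the argument is a short case distinction followed by an algebraic manipulation of~\eqref{eq}. The one point needing a (routine) justification is the application of the strong maximum principle on a noncompact manifold without boundary, but the statement used is purely local---a nonnegative harmonic function attaining the value $0$ at an interior point is identically $0$ on its connected component---so completeness and the absence of boundary cause no trouble. I therefore expect no real obstacle here: the hard part has effectively been isolated into the previously established Theorem~\ref{main}.
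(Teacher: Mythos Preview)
Your proposal is correct and follows essentially the same approach as the paper: use the strong maximum principle on the trace equation~\eqref{eq1} (which for $n=4$ says $\Delta R=0$) to obtain the dichotomy $R\equiv 0$ or $R>0$, then in the latter case invoke Theorem~\ref{main} to get $R$ constant and read off the Einstein condition from~\eqref{eq}. The paper states this argument more tersely, but the substance is identical.
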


We do not know if the condition of nonnegative scalar curvature is necessary or can be dropped from these two theorems. Theorem~\ref{main3D} has to be compared with a result of Anderson in~\cite{and1} concerning the characterization of three--dimensional critical metric of the $L^{2}$--norm of the Ricci curvature 
$$
\mathcal{R}^2(g) \,=\, \int_{M} |Ric|^{2}\,dV_{g} \,.
$$
In fact, Anderson in~\cite[Theorem 0.1]{and1} proved that every complete three--dimensional critical metric of $\mathcal{R}^{2}$ with nonnegative scalar curvature is flat.

The proof of Theorem~\ref{main} relies on a gradient estimate for the scalar curvature of critical metrics and it is inspired by the classical Yau's estimate for positive harmonic functions on complete Riemannian manifolds with nonnegative Ricci curvature~\cite{yau2}.

\

\section{Proof of Theorem~\ref{main}}

Let $(M^{n},g)$ be a complete Riemannian manifold satisfying~\eqref{eq} 
$$
R \,Ric - \nabla^{2} R \,=\, \tfrac{3}{4(n-1)} R^{2}\,g \,.
$$
We recall the traced equation~\eqref{eq1}
$$
\Delta R \, = \, \tfrac{n-4}{4(n-1)} R^{2} \,.
$$
If $(M^{n},g)$ is compact, the maximum principle implies that the scalar curvature of $g$ has to be constant and Theorem~\ref{main} follows. From now on we will assume that $(M^{n},g)$ is a complete, noncompact, critical metric of $\mathcal{S}^{2}$ with positive scalar curvature, $R>0$.   

Let us define $u=\log R$. From equation~\eqref{eq1}, the function $u$ satisfies
$$
\Delta u \,=\, -|\nabla u|^{2}+\tfrac{n-4}{4(n-1)}R \,.
$$  
Moreover, using Bochner formula, we can compute
\begin{eqnarray*}
\Delta |\nabla u|^{2} &=& 2 |\nabla^{2} u|^{2} + 2 Ric(\nabla u, \nabla u) + 2 \langle \nabla u, \nabla \Delta u \rangle \\
&=& 2|\nabla^{2}u|^{2} + 2 Ric(\nabla u,\nabla u) - 2\langle \nabla u, \nabla |\nabla u|^{2} \rangle + \tfrac{n-4}{2(n-1)}R|\nabla u|^{2} \\
&=& 2|\nabla^{2}u|^{2}+ \tfrac{2}{R} \nabla^{2}R (\nabla u,\nabla u) + \tfrac{1}{2}R|\nabla u|^{2} - 2\langle \nabla u, \nabla |\nabla u|^{2} \rangle \,,
\end{eqnarray*}
where in the last equality we have used the structure equation~\eqref{eq}. On the other hand, one has
\begin{eqnarray*}
2\nabla^{2}R(\nabla u,\nabla u) &=& 2R^{-2}\nabla^{2}R(\nabla R,\nabla R)\\ 
&=& R^{-2}\langle \nabla R, \nabla |\nabla R|^{2} \rangle \\
&=& R\langle \nabla u ,\nabla |\nabla u|^{2} \rangle + 2 R |\nabla u|^{4} \,.
\end{eqnarray*}
Hence, by the previous computation, we have obtained
\begin{equation}\label{c1}
\Delta |\nabla u|^{2} \,=\, 2|\nabla^{2}u|^{2} - \langle \nabla u, \nabla |\nabla u|^{2} \rangle + \tfrac{1}{2}R|\nabla u|^{2} + 2|\nabla u|^{4} \,.
\end{equation}
Moreover, the standard matrix inequality $|A|^{2}\geq (1/n)\, tr(A)^{2}$, implies at once that
$$
2|\nabla^{2} u|^{2} \,\geq\, \tfrac{2}{n}|\nabla u|^{4} + \tfrac{(n-4)^{2}}{8n(n-1)^{2}}R^{2}-\tfrac{n-4}{n(n-1)}R|\nabla u|^{2} \,.
$$ 
Combining this estimate with~\eqref{c1} yields
\begin{equation}\label{c2}
\Delta |\nabla u|^{2} \,\geq\, - \langle \nabla u, \nabla |\nabla u|^{2} \rangle + \tfrac{n^{2}-3n+8}{2n(n-1)}R|\nabla u|^{2} + \tfrac{2(n+1)}{n}|\nabla u|^{4} + \tfrac{(n-4)^{2}}{8n(n-1)^{2}}R^{2}\,.
\end{equation}
Choose now $\f$ to be a nonnegative cut--off function on $M^{n}$ and let $H = \f|\nabla u|^{2}$. Then, at any point where $\f>0$, estimate~\eqref{c2} implies
\begin{eqnarray*}
\Delta H &=& (\Delta\f) \,|\nabla u|^{2} + \f \,\Delta |\nabla u|^{2} + 2 \langle \nabla \f,\nabla |\nabla u|^{2} \rangle \\
&=& (\Delta \f) \,\f^{-1} H + \f\, \Delta |\nabla u|^{2} + 2 \f^{-1} \langle \nabla \f,\nabla H \rangle - 2 |\nabla \f|^{2} \f^{-2} H \\
&\geq & (\Delta \f) \,\f^{-1} H  + 2 \f^{-1} \langle \nabla \f,\nabla H \rangle - 2 |\nabla \f|^{2} \f^{-2} H - \langle \nabla u ,\nabla H \rangle \\
&& +\, \f^{-1} H \langle \nabla u , \nabla \f \rangle + \tfrac{n^{2}-3n+8}{2n(n-1)}R\, H + \tfrac{2(n+1)}{n} \f^{-1} H^{2} + \tfrac{(n-4)^{2}}{8n(n-1)^{2}} \f \, R^{2} \,.
\end{eqnarray*}
Moreover, Cauchy--Schwartz inequality gives 
$$
\f^{-1} H \langle \nabla u,\nabla \f \rangle \,\geq \, - |\nabla \f|\, 	\f^{-3/2} \, H^{3/2} \,
$$
and we have
\begin{eqnarray*}
\Delta H &\geq & (\Delta \f) \,\f^{-1} H  + 2 \f^{-1} \langle \nabla \f,\nabla H \rangle - 2 |\nabla \f|^{2} \f^{-2} H - \langle \nabla u ,\nabla H \rangle \\
&& -\, |\nabla \f|\, 	\f^{-3/2} \, H^{3/2} + \tfrac{n^{2}-3n+8}{2n(n-1)}R\, H + \tfrac{2(n+1)}{n} \f^{-1} H^{2} + \tfrac{(n-4)^{2}}{8n(n-1)^{2}} \f \, R^{2} \,.
\end{eqnarray*}
Thus, at a maximum point $p_{0}\in M^{n}$ of $H$, one has
\begin{eqnarray}\label{c3} \nonumber
0 &\geq & (\Delta \f) H - 2 |\nabla \f|^{2} \f^{-1} H - |\nabla \f|\, 	\f^{-1/2} \, H^{3/2} \\\nonumber
&& + \,\tfrac{n^{2}-3n+8}{2n(n-1)}\f\,R\, H + \tfrac{2(n+1)}{n} H^{2} + \tfrac{(n-4)^{2}}{8n(n-1)^{2}} \f^{2} \, R^{2} \\
&\geq &  (\Delta \f) H - 2 |\nabla \f|^{2} \f^{-1} H - |\nabla \f|\, 	\f^{-1/2} \, H^{3/2} + \tfrac{2(n+1)}{n} H^{2} \,,
\end{eqnarray}
where we have used the fact that $R>0$. 

Let $\f=\f(r)$ be a function of the distance $r$ to a fixed point $p\in M^{n}$ and let $B_{s}(p)$ be a geodesic ball of radius $s$. We denote by $C_{p}$ the cut locus at the point $p$ and we choose $\f$ satisfying the following properties: $\f=1$ on $B_{s}(p)$, $\f=0$ on $M^{n} \setminus B_{2s}(p)$ and
$$
-c\,s^{-1} \f^{1/2} \,\leq\, \f' \leq 0 \quad \quad\hbox{and} \quad \quad |\f''| \,\leq\, c \,s^{-2} \,
$$
on $B_{2s}(p) \setminus B_{s}(p)$ for some positive constant $c>0$. In particular, $\f$ is smooth in $M^{n}\setminus C_{p}$ and in $\{B_{2s}(p) \setminus B_{s}(p)\}\setminus C_{p}$ one has
\begin{equation}\label{c4}
|\nabla \f|\,\f^{-1/2} \,\leq\, |\f'|\,\f^{-1/2} \,\leq\, c\, s^{-1} \,.
\end{equation}
Hence, to conclude the proof it remains to estimate the Laplacian term $\Delta \f$. Notice that
$$
\Delta \f \,=\, \f' \Delta r + \f'' \,.
$$
Let $v=-u=-\log R$. One has
$$
\nabla^{2}v - dv \otimes dv \,=\, - R^{-1} \nabla^{2} R \,.
$$
Thus, by the structure equation~\eqref{eq}, we obtain that the metric $g$ satisfies 
$$
Ric + \nabla^{2}v - dv\otimes dv \,=\, \tfrac{3}{4(n-1)} e^{-v} \,g \geq 0 
$$
In particular, following the notations in~\cite{weiwyl}, the $1$--Bakry--Emery Ricci tensor $Ric^{1}_{v}$ of $g$ defined by
$$
Ric^{1}_{v} \,=\, Ric + \nabla^{2}v - dv\otimes dv 
$$
is nonnegative. Hence, by the Laplacian comparison estimate on manifolds with nonnegative $1$--Bakry--Emery Ricci tensor~\cite[Theorem A.1]{weiwyl}, for every $x\in\{B_{2s}(p) \setminus B_{s}(p)\}\setminus C_{p}$, one has
\begin{eqnarray*}
\Delta r &\leq& \langle \nabla r, \nabla v \rangle + n\,r^{-1} \\
&\leq& |\nabla u| + n\,r^{-1} \\
&=& \f^{-1/2}\,H^{1/2} + n\,s^{-1} \,,
\end{eqnarray*}
since $s\leq r$. In particular, for every $x\in\{B_{2s}(p) \setminus B_{s}(p)\}\setminus C_{p}$, we obtain
\begin{eqnarray*}
\Delta \f &=& \f' \Delta r + \f'' \\
&\geq& \f'\,\f^{-1/2} H^{1/2}+ n\f' \, s^{-1} - c\, s^{-2} \\
&\geq& -c\,s^{-1} H^{1/2}- nc\,s^{-2} - c\,s^{-2} \\
&=& -c\,s^{-1} H^{1/2} - C_{1}\, s^{-2} \,, 
\end{eqnarray*}
for some positive constant $C_{1}>0$. Let us assume that the maximum point $p_{0}$ of $H$ does not belong to the cut locus $C_{p}$ of $p$. Combining the last estimate with~\eqref{c3} and~\eqref{c4}, at $p_{0}\in M^{n}$, we get
$$
0 \,\geq\, -C_{2}\,s^{-2} H - C_{3}\,s^{-1} H^{3/2} + \tfrac{2(n+1)}{n} H^{2} \,,
$$
for some positive constants $C_{2},C_{3}>0$. On the other hand, 
$$
C_{3}\,s^{-1} H^{3/2} \,\leq\,  \alpha H^{2} + \tfrac{C_{3}^{2}}{4} \alpha^{-1}\,s^{-2}\, H \,
$$
for every $\alpha>0$. Thus, if $\alpha< 2(n+1)/n$, we have proved that, if $p_{0}\notin C_{p}$, then
$$
H(p_{0}) \,\leq\, C_{4} \, s^{-2} \,,
$$
for some positive constants $C_{4}>0$. By letting $s \rightarrow +\infty$ we obtain that $H\equiv 0$, so $u$ is constant on $M^{n}$ and $g$ has constant scalar curvature. 

If $p_{0}\in C_{p}$ we argue as follows (this trick is usually referred to Calabi). Let $\gamma:[0,L] \rightarrow M^{n}$, where $L=d(p_{0},p)$, be a minimal geodesic joining $p$ to $p_{0}$, the maximum point of $H$. Let $p_{\eps}=\gamma(\eps)$ for some $\eps>0$. Define now
$$
H_{\eps} \,=\, \f \big(d(x,p_{\eps})+\eps\big) |\nabla u|^{2}\,.
$$
Since $d(x,p_{\eps})+\eps\geq d(x,p)$ and $d(p_{0},p_{\eps}) + \eps = d(p_{0},p)$, it is easy to see that $H_{\eps}(p_{0}) = H (p_{0})$ and 
$$
H_{\eps}(x) \,\leq \, H(x) \quad\quad\hbox{for all }\,x \in M^{n} \,, 
$$
since $\f'\leq 0$. Hence $p_{0}$ is also a maximum point for $H_{\eps}$. Moreover, since $d(x,p_{\eps})$ is smooth in a neighborhood of $p_{0}$ we can apply the maximum principle argument as before to obtain an estimate for $H_{\eps}(p_{0})$ which depends on $\eps$. Taking the limit as $\eps\rightarrow 0$, we obtain the desired estimate on $H$.

This concludes the proof of Theorem~\ref{main}. As we have observed in the introduction, Theorem~\ref{main3D} and Theorem~\ref{main4D} now follows simply by observing that, if $n\leq 4$ and $g$ is a critical metric with nonnegative scalar curvature, then the strong maximum principle implies that either $R\equiv 0$ or $R>0$ on $M^{n}$.

\

\section{Appendix}

Let $(M^{n},g)$, $n\geq 3$, be a compact Riemannian manifold and consider the scale--invariant functional
$$
\mathcal{S}_{v}^{2}(g) \,=\, \frac{\int_{M}R^{2}\,dV_{g}}{\big(\int_{M}dV_{g}\big)^{(n-4)/n}} \,.
$$
The Euler--Lagrange equation for a critical metric of $\mathcal{S}_{v}^{2}$ is given by
\begin{equation}\label{eqc}
R \,Ric - \nabla^{2} R \,=\, \tfrac{1}{n} R^{2}\,g -\tfrac{1}{n} \Delta R \,g\,.
\end{equation}
Taking the divergence of~\eqref{eqc}, one has
\begin{eqnarray*}
0 &=& Ric(\nabla R,\cdot) + \tfrac{1}{2} R\,\nabla R - \Delta \nabla R -\tfrac{2}{n} R\,\nabla R + \tfrac{1}{n}\nabla \Delta R \\
&=& Ric(\nabla R, \cdot) +\tfrac{n-4}{4n} \nabla R^{2} - \tfrac{n-1}{n} \nabla \Delta R - Ric(\nabla R, \cdot) \\
&=& \tfrac{1}{4n} \nabla \big( (n-4) R^{2} - 4(n-1)\Delta R \big) \,,
\end{eqnarray*}
where we have used Schur's identity $dR= 2 \,\hbox{div} Ric$ and the commutation formula for covariant derivatives. Hence, one has that any solution of~\eqref{eqc}, satisfies
\begin{equation}\label{eqc1}
\Delta R \,=\, \tfrac{n-4}{4(n-1)} \big( R^{2} - \overline{R^{2}} \big)\,,
\end{equation}
where $\overline{R^{2}}=\big(\int_{M} R^{2}\,dV_{g}\big) / \big(\int_{M} dV_{g}\big)$. 
Obviously, if a metric is scalar flat or Einstein, then it  critical for $\mathcal{S}^{2}_{v}$. We prove that also the converse is true, if we assume that the critical metric has nonnegative scalar curvature. We notice that equation~\eqref{eqc1} implies that any four dimensional critical metric of $\mathcal{S}^{2}_{v}$ has constant scalar curvature and it is either scalar flat or Einstein. 
\begin{prop} Any compact critical metric of $\mathcal{S}^{2}_{v}$ with nonnegative scalar curvature either is scalar flat or Einstein.
\end{prop}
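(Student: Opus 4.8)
The plan is to show that the scalar curvature $R$ must be constant; the statement then follows immediately, since if $R$ is constant then~\eqref{eqc} reduces to $R\,Ric=\tfrac{1}{n}R^{2}g$, so either $R\equiv 0$ and $g$ is scalar flat, or $R$ is a positive constant and $Ric=\tfrac{R}{n}g$, i.e.\ $g$ is Einstein. When $n=4$ this is already clear: equation~\eqref{eqc1} reads $\Delta R=0$ on the compact manifold $M$, hence $R$ is constant. So the substantive case is $n\neq 4$, and there the hypothesis $R\geq 0$ will be essential.

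First I would pair the Euler--Lagrange equation~\eqref{eqc} with the trace--free Ricci tensor $\mathring{Ric}=Ric-\tfrac{R}{n}g$. Since the right--hand side of~\eqref{eqc} is pure trace, it annihilates against the trace--free tensor $\mathring{Ric}$, and one is left with the pointwise identity $R\,|\mathring{Ric}|^{2}=\langle\nabla^{2}R,\mathring{Ric}\rangle=\nabla_{i}\nabla_{j}R\,\mathring{R}_{ij}$. Integrating over the compact manifold $M$ and integrating by parts once,
\[
\int_{M}R\,|\mathring{Ric}|^{2}\,dV_{g}\,=\,-\int_{M}\nabla_{j}R\,\nabla_{i}\mathring{R}_{ij}\,dV_{g}\,=\,-\tfrac{n-2}{2n}\int_{M}|\nabla R|^{2}\,dV_{g}\,,
\]
where in the last step I used the contracted second Bianchi identity $\div Ric=\tfrac{1}{2}dR$, which gives $\nabla_{i}\mathring{R}_{ij}=\big(\tfrac{1}{2}-\tfrac{1}{n}\big)\nabla_{j}R=\tfrac{n-2}{2n}\nabla_{j}R$.

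This displayed identity is the heart of the argument. Its left--hand side is nonnegative because $R\geq 0$, while its right--hand side is nonpositive because $n\geq 3$; hence both integrals vanish. In particular $\nabla R\equiv 0$, so $R$ is constant, and the reduction above shows that $g$ is scalar flat or Einstein. I do not expect a genuine conceptual obstacle: the only care needed is routine bookkeeping — verifying that the pure--trace part of~\eqref{eqc} really drops out when paired with $\mathring{Ric}$, and computing the coefficient $\tfrac{n-2}{2n}$ correctly from the Bianchi identity. It is worth stressing that nonnegativity of $R$ is used exactly once, namely in the sign of the left--hand side of the displayed identity, which is why this method genuinely requires the hypothesis $R\geq 0$.
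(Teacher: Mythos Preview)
Your proof is correct and is essentially the same argument as the paper's: the paper contracts~\eqref{eqc} with the full Ricci tensor rather than with $\mathring{Ric}$, but after subtracting the trace this yields precisely your integral identity $\int_{M}R\,|\mathring{Ric}|^{2}\,dV_{g}=-\tfrac{n-2}{2n}\int_{M}|\nabla R|^{2}\,dV_{g}$, and the conclusion follows in the same way.
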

\begin{proof} Contracting equation~\eqref{eqc} with the Ricci tensor, one has
$$
R \,\big|Ric - \tfrac{1}{n}R\,g\big|^{2} \,=\, R^{ij}\nabla_{ij} R -\tfrac{1}{n} R \,\Delta R \,.  
$$
Integrating on $M^{n}$, we get
\begin{eqnarray*}
\int_{M} R \,\big|Ric - \tfrac{1}{n}R\,g\big|^{2} \,dV_{g} &=& \int_{M} R^{ij} \nabla_{ij}R \,dV_{g} - \tfrac{1}{n} \int_{M} R \Delta R \,dV_{g} \\
&=& -\tfrac{1}{2} \int_{M} |\nabla R|^{2} +\tfrac{1}{n} \int_{M} |\nabla R|^{2} \,dV_{g} \\
&=& - \tfrac{n-2}{2n} \int_{M} |\nabla R|^{2} \,dV_{g} \,.
\end{eqnarray*}
Since $R\geq 0$, this implies that either $R\equiv 0$ or the metric $g$ is Einstein.
\end{proof}

As we have observed in the introduction, this result was proved in dimension three by Anderson in~\cite{and3}.

\

\

\begin{ackn}
\noindent The author is partially supported by the Italian project FIRB--IDEAS ``Analysis and Beyond''.
\end{ackn}

\bigskip

\bigskip

\bibliographystyle{amsplain}
\bibliography{critical}

\bigskip

\bigskip

\parindent=0pt

\end{document}